\setlist[enumerate]{label={\upshape(\alph*)}}
\tikzstyle{vertex}=[circle, draw, inner sep=0pt, minimum size=4pt,fill=black]
\tikzstyle{hollowvertex}=[circle, draw, inner sep=0pt, minimum size=4pt]
\tikzstyle{namedvertex}=[circle, draw, inner sep=1pt, minimum size=12pt]
\tikzstyle{phantomvertex}=[circle, draw, inner sep=0pt, minimum size=4pt,color=white]
\newtheorem{theorem}{Theorem}[section]
\newtheorem{lemma}[theorem]{Lemma}
\newcommand{\real}{\mathbb{R}}
\newtheorem{proposition}[theorem]{Proposition}
\newtheorem{corollary}[theorem]{Corollary}
\theoremstyle{definition}
\newtheorem{question}{Question}
\begin{document}

\title{A note on purely imaginary independence roots
}
\author{Jason I. Brown\\
\small Department of Mathematics \& Statistics\\
\small Dalhousie University\\
\small jason.brown@dal.ca\\
\\
Ben Cameron\\
\small School of Computer Science\\
\small University of Guelph\\
\small ben.cameron@uoguelph.ca
}
\date{\today}

\maketitle

\begin{abstract}
The independence polynomial of a graph is the generating polynomial for the number of independent sets of each cardinality and its roots are called independence roots. We investigate here purely imaginary independence roots. We show that for all $k\ge 4$, there are connected graphs with independence number $k$ and purely imaginary independence roots. We also show that every graph is an induced subgraph of a connected graph with purely imaginary independence roots and classify every purely imaginary number of the form $ri$, $r$ rational, that is an independence root.

\vspace{0.25in}

\noindent
{\bf MSC 2010: 05C31, 05C69} 

\vspace{0.25in}

\noindent
{\bf Keywords: independent set; independence polynomial; purely imaginary root; root} 
\end{abstract}

\section{Introduction}
For a (finite and simple) graph $G$ the \textit{independence polynomial} of $G$ is defined by $$I(G,x)=\sum_{k=0}^{\alpha(G)}s_kx^k$$
where $s_k$ is the number of independent sets of $G$ with cardinality $k$ and $\alpha(G)$ is the \textit{independence number} of $G$, that is, the cardinality of the largest independent set in $G$. The roots of $I(G,x)$ are called the \textit{independence roots} of $G$, and have been well studied, with results including:
\begin{itemize}
\item An independence root of smallest modulus is always real \cite{BDN2000,Csikvari2013}.
\item The roots of independence polynomials of claw-free graphs (that is, those that do not contain an induced $K_{1,3}$) are always real \cite{Chudnovsky2007}.
\item The closure of the real independence roots is $(-\infty,0]$, while the closure of all of the (complex) independence roots is the entire complex plane \cite{INDROOTS}.
\item Various bounds on the maximum modulus are known both for all graphs and trees of order $n$ (that is, on $n$ vertices) \cite{BrownCameron2018maxmod} and for graphs of order $n$ and fixed independence number \cite{BrownNowakowski2001}.
\end{itemize}

Much of the work for independence roots has followed the path of other graphs polynomials, and in particular \textit{chromatic polynomials} (the chromatic polynomial $\pi(G,x)$ of a graph $G$ counts the number of ways to properly colour the vertices with $x$ colours, when $x$ is a nonnegative integer). One of the outstanding questions about chromatic polynomials is whether they can have any nonzero purely imaginary roots, and Bohn  \cite{Bohn2014} states that ``it is widely suspected there are no purely imaginary chromatic roots.'' (For a complex number $z$, $z$ is \textit{purely imaginary} if $\operatorname{Re}(z)=0$, and if further $\operatorname{Im}(z)$ is rational, we say that $z$ is \textit{rational purely imaginary}.)

There have been no known examples of purely imaginary independence roots in the literature in spite of the fact that there are independence roots arbitrarily close to every purely imaginary number \cite{INDROOTS}. In this note we show that while no graph of small independence number (less than $4$) has purely imaginary independence roots, there are infinitely many connected graphs with purely imaginary independence roots and every graph is a subgraph of such a graph. We also show that connected graphs with purely imaginary independence roots exist for any given independence number greater than $3$ and classify which rational purely imaginary independence roots can exist.

We remark that a graph having a purely imaginary independence root at $i$ says something structurally interesting about the graph. By substituting in $x = i$ and collecting up the real and imaginary parts, we observe that a graph $G$ has an independence root at $i$ if and only if $G$ has the same number of independent sets with cardinality $0\ (\operatorname{mod } 4)$ as independent sets with cardinality $2\ (\operatorname{mod } 4)$ and the same number of independent sets with cardinality $1\ (\operatorname{mod } 4)$ as independent sets with cardinality $3\ (\operatorname{mod } 4)$.

\section{Purely Imaginary Independence Roots}

A simple observation is that  if $p(x) = \sum a_{i}x^{i}$ is a polynomial with real coefficients and $b\in\real$ ($b\neq0$), then $p(x)$ has a root at $bi$ if and only if $p_{even}(x) = \sum a_{2i}x^{i}$ and $p_{odd}(x) = \sum a_{2i+1}x^{i}$ both have roots at $-b^2$. From this, and the proof of \cite[Proposition 2.1]{BrownCameron2018stability} we have the following result.

\begin{proposition}\label{prop:smallindnum}
If $\alpha(G)\le 3$, then $G$ has no purely imaginary independence roots.
\end{proposition}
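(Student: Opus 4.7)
The plan is to apply the even/odd decomposition observation stated just before the proposition. Since $\alpha(G)\le 3$, we have $I(G,x)=1+s_1x+s_2x^2+s_3x^3$ (with the convention $s_k=0$ for $k>\alpha(G)$), so
\[
I_{\text{even}}(x)=1+s_2x, \qquad I_{\text{odd}}(x)=s_1+s_3x.
\]
A nonzero purely imaginary root $bi$ exists if and only if both of these linear polynomials vanish at the common value $-b^2$. So the whole proposition reduces to checking that no real value of the form $-b^2$ is a simultaneous root of these two polynomials.

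I would then split into cases on $\alpha(G)$. If $\alpha(G)\le 1$, the polynomial is of degree at most one and the only possible root is real. If $\alpha(G)=2$, then $s_3=0$ and $I_{\text{odd}}(x)=s_1$, a nonzero constant (since $G$ has at least two vertices, $s_1\ge 2$), hence no root. The interesting case is $\alpha(G)=3$: $I_{\text{even}}(-b^2)=0$ forces $b^2=1/s_2$, while $I_{\text{odd}}(-b^2)=0$ forces $b^2=s_1/s_3$. Compatibility of these two requirements is equivalent to the single numerical identity
\[
s_3=s_1 s_2.
\]

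The heart of the proof is ruling this identity out, and I would do so with a simple double counting argument (which I expect is the content of the cited \cite[Proposition 2.1]{BrownCameron2018stability}). Count pairs $(\{u,v\},T)$ where $T$ is an independent triple and $\{u,v\}\subset T$: each independent triple contributes exactly $3$ such pairs, while each independent pair $\{u,v\}$ can be extended to a triple by at most $s_1-2$ vertices (namely the other vertices of $G$). This yields $3s_3\le (s_1-2)s_2$, hence $s_3\le \tfrac{(s_1-2)s_2}{3}<s_1s_2$, where the strict inequality uses $s_2>0$ (valid when $\alpha(G)=3$, since then any independent triple contains three independent pairs). In particular $s_3\neq s_1s_2$, completing the argument.

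I do not expect any real obstacle: the even/odd reduction collapses the question into two linear polynomials, and the only nontrivial content is the elementary inequality $3s_3\le (s_1-2)s_2$, which follows at once from counting pair-triple incidences. The one spot worth being careful about is confirming that $s_1\ge 2$ in the $\alpha=2$ case and $s_2\ge 3$ in the $\alpha=3$ case, so that the strict inequality goes through.
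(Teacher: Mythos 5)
Your proof is correct and rests on the same even/odd reduction the paper uses, but where the paper simply cites \cite[Proposition 2.1]{BrownCameron2018stability} (which records that, for $\alpha(G)=3$, the root $r$ of $I_{\text{odd}}$ is strictly less than the root $s$ of $I_{\text{even}}$), you supply a self-contained combinatorial argument. Your reformulation --- that a common root $-b^2$ of $1+s_2x$ and $s_1+s_3x$ exists iff $s_3=s_1s_2$ --- is exactly the condition $r=s$, and your double count of pair-in-triple incidences, giving $3s_3\le (s_1-2)s_2$ and hence $s_3<s_1s_2$ whenever $s_2>0$, rules it out cleanly. This is a genuine upgrade in self-containedness over the paper's citation, at the cost of a few extra lines. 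The cases $\alpha(G)\le 2$ are handled the same way in both. One small remark: in the $\alpha(G)=2$ case you only need $s_1>0$ (automatic for a nonempty graph) for $I_{\text{odd}}$ to be a nonzero constant, and in the $\alpha(G)=3$ case $s_2>0$ is automatic since any independent triple contains three independent pairs --- you flag both of these yourself, so the argument is airtight.
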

\begin{proof}
While the statement of \cite[Proposition 2.1]{BrownCameron2018stability} does not appear to include purely imaginary independence roots, the authors actually showed that if $\alpha(G)\le 2$, then $G$ has all real independence roots and if $\alpha(G)=3$, then $r<s$ where $r$ is the root of $I_{odd}(G,x)$ and $s$ is the root of $I_{even}(G,x)$. Given the above comment, this shows that for $\alpha(G)\le 3$, $G$ has no purely imaginary independence roots.
\end{proof}

To provide examples of graphs with purely imaginary independence roots, the following graph operations will be essential. For a graph $G$, $\overline{G}$ denotes the complement of $G$. The \textit{disjoint union} $G\cup H$ is the graph with vertex set $V(G)\cup V(H)$ and edge set $E(G)\cup E(H)$. The \textit{join} $G + H$ of two graphs $G$ and $H$ is formed from their disjoint union by adding in all edges between a vertex of $G$ and a vertex of $H$.

\subsection{Graphs via Joins}

We begin by showing that graphs with independence number $4$ and purely imaginary independence roots not only exist, but that there are infinitely many such graphs. To do this, we require the following technical lemma.

\begin{lemma}\label{lem:diophantine}
Let $x_1=y_1=1$, $x_2=5$, $y_2=3$ and for $n\ge 3$, $x_n=4x_{n-1}-x_{n-2}$ and $y_n=4y_{n-1}-y_{n-2}$. Then
\begin{itemize}
\item[i)] $x_n^2-3y_n^2=-2$ for all $n\ge 1$ and
\item[ii)] $x_{n-1}x_{n-2}-3y_{n-1}y_{n-2}=-4$ for all $n\ge 3$.
\end{itemize} 

\end{lemma}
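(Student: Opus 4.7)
The plan is to prove both statements simultaneously by strong induction on $n$, exploiting the common recurrence $z_n = 4z_{n-1} - z_{n-2}$ satisfied by both sequences. Let me set $f_n = x_n^2 - 3y_n^2$ and $g_n = x_n x_{n-1} - 3 y_n y_{n-1}$; statement (i) claims $f_n = -2$ for all $n \geq 1$ and statement (ii) (after re-indexing) claims $g_n = -4$ for all $n \geq 2$.

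First I would verify the base cases directly: $f_1 = 1 - 3 = -2$, $f_2 = 25 - 27 = -2$, and $g_2 = x_2 x_1 - 3 y_2 y_1 = 5 - 9 = -4$. These require no cleverness.

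The inductive step rests on two algebraic identities obtained by expanding the recurrences. Substituting $x_n = 4x_{n-1} - x_{n-2}$ and $y_n = 4y_{n-1} - y_{n-2}$ and collecting terms gives
\begin{align*}
f_n &= 16 f_{n-1} + f_{n-2} - 8 g_{n-1},\\
g_n &= 4 f_{n-1} - g_{n-1}.
\end{align*}
Assuming inductively that $f_{n-1} = f_{n-2} = -2$ and $g_{n-1} = -4$, the first identity yields $f_n = -32 - 2 + 32 = -2$, and the second yields $g_n = -8 + 4 = -4$. This closes the induction for both (i) and (ii) at once.

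The only real obstacle is bookkeeping, making sure the indices in the second identity (which couples $g_n$ back to $g_{n-1}$ and $f_{n-1}$) line up with the statement of (ii), which is phrased in terms of $x_{n-1}x_{n-2} - 3y_{n-1}y_{n-2}$ rather than $g_n$. Other than that, the computation is routine polynomial expansion, so no genuinely difficult step arises; the trick is just noticing that the two claims must be proved together because the recursion for $f_n$ pulls in $g_{n-1}$ and the recursion for $g_n$ pulls in $f_{n-1}$.
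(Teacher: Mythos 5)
Your proof is correct and is essentially the same as the paper's: the paper expands the recurrences in exactly the same way to obtain your two identities (without naming them $f_n$, $g_n$) and runs the same simultaneous induction. Your $f_n$/$g_n$ notation makes the bookkeeping slightly cleaner, but the content is identical.
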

\begin{proof}
We prove both statements simultaneously by induction on $n$. Both statements are readily verified for $n\le 4$ so suppose they hold for all $4\le k<n$. Now,

\begin{align*}
x_{n-1}x_{n-2}-3y_{n-1}y_{n-2}&=(4x_{n-2}-x_{n-3})x_{n-2}-3(4y_{n-2}-y_{n-3})y_{n-2}\\
&=4(x_{n-2}^2-3y_{n-2}^2)-(x_{n-3}x_{n-2}-3y_{n-3}y_{n-2})\\
&=4(-2)-(-4)\ \ \ \ \ \ \ \ \ \ \ \text{(by the inductive hypothesis)}\\
&=-4
\end{align*}
and
\begin{align*}
x_n^2-3y_n^2&=(4x_{n-1}-x_{n-2})^2-3(4y_{n-1}-y_{n-2})^2\\
&=16(x_{n-1}^2-3y_{n-1}^2)+x_{n-2}^2-3y_{n-2}^2-8(x_{n-1}x_{n-2}-3y_{n-1}y_{n-2})\\
&=-34-8(x_{n-1}x_{n-2}-3y_{n-1}y_{n-2})\ \ \ \ \text{(by the inductive hypothesis)}\\
&=-34-8(-4)\ \ \ \ \ \ \ \ \ \ \ \ \ \ \text{(by the inductive hypothesis and above)}\\
&=-2.
\end{align*} 
\end{proof}

For positive integers $a,b,c,d$, define the graph $G(a,b,c,d)$ by $G(a,b,c,d)=4K_{a}+3K_{b}+2K_{c}+K_{d}$ where $mK_{n}=\bigcup_{i=1}^{m}K_n$, i.e. the disjoint union of $m$ copies of $K_n$. Note that $G(a,b,c,d)$ is a special case of the graphs used by Alavi et al.~\cite{Alavi} for finding graphs with non-unimodal independence polynomials. We see that 
$$I(G(a,b,c,d),x)=(1+ax)^4+(1+bx)^3+(1+cx)^2+dx-2.$$

We note that since all independence polynomials have real coefficients, all nonreal roots come in complex conjugate pairs, so that a graph has $bi$ as an independence root if and only if $-bi$ is also an independence root. Thus through this note, we only need to argue that $bi$ is an independence root to conclude both $bi$ and $-bi$ are independence roots.
\begin{theorem}\label{thm:infmanyindnum4}
For $n\ge 1$, $G(a,b,c,d)$ has independence roots at $i$ and $-i$ if $a=3y_n$, $b=3y_nx_n$, $c=1$, and $d=4a^3-4a+b^3-3b-2$.
\end{theorem}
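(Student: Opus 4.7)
The plan is to apply the criterion noted at the start of Section 2: since $I(G(a,b,c,d),x)$ has real coefficients, $i$ is an independence root if and only if both $I_{\text{even}}(G,-1)=0$ and $I_{\text{odd}}(G,-1)=0$, corresponding to the vanishing of the real and imaginary parts of $I(G(a,b,c,d),i)$. I would start from the closed form $I(G(a,b,c,d),x)=(1+ax)^4+(1+bx)^3+(1+cx)^2+dx-2$ and read off the coefficients by the binomial theorem, obtaining $s_0=1$, $s_1=4a+3b+2c+d$, $s_2=6a^2+3b^2+c^2$, $s_3=4a^3+b^3$, and $s_4=a^4$.

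Setting $c=1$ and evaluating $I_{\text{odd}}(x)=s_1+s_3x$ at $-1$ yields the linear equation $4a+3b+2+d-4a^3-b^3=0$; solving for $d$ gives exactly the stipulated value $d=4a^3-4a+b^3-3b-2$. Thus the $I_{\text{odd}}(-1)=0$ half of the criterion is arranged automatically by the choice of $d$, and the problem reduces to checking $I_{\text{even}}(-1)=0$, which with $c=1$ simplifies to the single identity $a^4-6a^2-3b^2=0$.

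The final step is to verify this identity under the substitutions $a=3y_n$ and $b=3y_nx_n$. A direct computation gives
\[
a^4-6a^2-3b^2=81y_n^4-54y_n^2-27y_n^2x_n^2=27y_n^2\bigl(3y_n^2-x_n^2-2\bigr),
\]
which vanishes precisely when $x_n^2-3y_n^2=-2$, i.e.\ exactly when Lemma~\ref{lem:diophantine}(i) applies. There is no substantive obstacle here: the parameterization $(a,b)=(3y_n,3y_nx_n)$ was engineered so that the real-part condition $a^4=6a^2+3b^2$ collapses, after factoring out $27y_n^2$, to precisely the Pell-type identity that the recurrence in Lemma~\ref{lem:diophantine} was set up to satisfy, while the imaginary-part condition is absorbed into the definition of $d$.
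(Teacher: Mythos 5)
Your proposal is correct and follows essentially the same approach as the paper: both reduce the claim to the two conditions $a^4-6a^2-3b^2=0$ and $-4a^3+4a-b^3+3b+2+d=0$ (yours via $I_{\mathrm{even}}(-1)=I_{\mathrm{odd}}(-1)=0$, the paper's via directly separating the real and imaginary parts of $I(G,i)$, which are the same computation), observe that the second is forced by the definition of $d$, and invoke Lemma~\ref{lem:diophantine}(i) for the first. Your final factorization $a^4-6a^2-3b^2=27y_n^2\bigl(3y_n^2-x_n^2-2\bigr)$ is a slightly cleaner way to see this than the paper's manipulation through $b=\sqrt{(a^4-6a^2)/3}$, but it is the same argument.
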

\begin{proof}
Suppose $a,b,c,d$ are as in the statement of the theorem. It is clear that $a,b,$ and $c$ are positive integers. Since $a^3-a\ge 6$ for all integers $a\ge 2$ and $b^3-3b\ge 2$ for all integers $b\ge 2$, it follows that $d$ is also a positive integer. Therefore, $G(a,b,c,d)$ is a well-defined graph and we may evaluate its independence polynomial at $i$ to obtain
\begin{align*}
I(G(a,b,c,d),x)&=(1+ai)^4+(1+bi)^3+(1+ci)^2+di-2\\
&=a^4-6a^2-3b^2-c^2+1+i(-4a^3+4a-b^3+3b+2c+d).
\end{align*} 

From this we find that $i$ is an independence root of $G(a,b,c,d)$ if and only if the following two equations hold:
\begin{align}
a^4-6a^2-3b^2&=0 \label{eq:realpart}\\
-4a^3+4a-b^3+3b+2+d&=0\label{eq:impart}
\end{align}

By the definition of $d$, it is clear that (\ref{eq:impart}) holds, so it remains only to show that (\ref{eq:realpart}) holds.

By Lemma~\ref{lem:diophantine} and since $a=3y_n$ and $b=3y_nx_n$, we have 
\begin{align*}
b&=3y_n\sqrt{3y_n^2-2}\\
&=\sqrt{27y_n^4-18y_n^2}\\
&=\sqrt{\frac{81y_n^4-54y_n^2}{3}}\\
&=\sqrt{\frac{(3y_n)^4-6(3y_n)^2}{3}}\\
&=\sqrt{\frac{a^4-6a^2}{3}}.
\end{align*}
Thus, equation (\ref{eq:realpart}) is satisfied.
\end{proof}

Note that $\{y_n\}_{n\ge 1}$ is an increasing sequence and $\alpha(G(a,b,c,d))=4$ regardless of the values of $a,\ b,\ c,$ and $d$, so we have found infinitely many connected graphs with independence number $4$ and purely imaginary independence roots. As the independence polynomial is multiplicative over disjoin union, trivially there are disconnected graphs of all independence numbers at least $4$ with $i$ and $-i$ as independence roots (and hence have purely imaginary independence roots). The problem becomes more interesting if we want to insist that the graphs are connected, and we spend the rest of this section building such graphs.

\begin{lemma}\label{lem:seedplus8k}
If $G$ is a graph with independence number $\alpha$ such that $G+K_d$ has an independence root at $i$ for some $d\ge 1$, then $(G\cup \overline{K_{8k}})+K_{16^kd}$ is a connected graph with independence number $\alpha+8k$ and independence roots at $i$ and $-i$ for all $k\ge 0$.
\end{lemma}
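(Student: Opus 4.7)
The plan is to turn the construction into an explicit formula for the independence polynomial and then evaluate at $x=i$. Using the standard identities $I(G\cup H,x)=I(G,x)\,I(H,x)$ and $I(G+H,x)=I(G,x)+I(H,x)-1$, together with $I(\overline{K_n},x)=(1+x)^n$ and $I(K_n,x)=1+nx$, I would first compute
\[
I\bigl((G\cup\overline{K_{8k}})+K_{16^kd},\,x\bigr)=I(G,x)(1+x)^{8k}+16^k d\,x.
\]
Next, I would rewrite the hypothesis: since $I(G+K_d,x)=I(G,x)+dx$, the assumption that $G+K_d$ has an independence root at $i$ is equivalent to $I(G,i)=-di$.

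Substituting $x=i$ and $I(G,i)=-di$ into the displayed formula yields $di\bigl(16^k-(1+i)^{8k}\bigr)$. The arithmetic engine of the lemma is the identity $(1+i)^2=2i$, which gives $(1+i)^8=(2i)^4=16$, and hence $(1+i)^{8k}=16^k$. Therefore the expression vanishes at $x=i$, so $i$ is an independence root; since independence polynomials have real coefficients, $-i$ is then automatically a root too (as noted in the paragraph just before Theorem~\ref{thm:infmanyindnum4}).

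It remains to check the structural claims. Connectedness is immediate from the join: every vertex of the clique $K_{16^kd}$ is adjacent to every vertex of $G\cup\overline{K_{8k}}$, and $16^kd\ge1$ ensures the clique is nonempty. For the independence number, every independent set in a join lies entirely in one side, so it has size at most $\max(\alpha(G\cup\overline{K_{8k}}),\,1)=\max(\alpha+8k,1)=\alpha+8k$, and the independent set consisting of a largest independent set of $G$ together with all of $\overline{K_{8k}}$ realizes this bound. The only real obstacle is the matching of exponents: the constants $8k$ and $16^k$ in the statement are forced precisely by the identity $(1+i)^{8k}=16^k$, so once that observation is made the rest is routine bookkeeping.
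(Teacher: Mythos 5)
Your proof is correct and follows essentially the same route as the paper's: rewrite the hypothesis as $I(G,i)=-di$, exploit $(1+i)^{8k}=16^k$, and observe the two contributions cancel at $x=i$. The only difference is cosmetic — you write out the full polynomial $I(G,x)(1+x)^{8k}+16^k d\,x$ before substituting, and you spell out the (routine) connectedness and independence-number checks that the paper leaves to the reader.
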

\begin{proof}
From the definitions of join and disjoint union, it is clear that $(G\cup \overline{K_{8k}})+K_{16^kd}$ is a connected graph with independence number $\alpha+8k$. Since $G+K_d$ has an independence root at $i$, it follows that $I(G,i)=-di$. Since $(1+i)^{8k}=16^k$, it follows that $I(G\cup \overline{K_{8k}},i)=-16^kdi$. Therefore, $I((G\cup \overline{K_{8k}})+K_{16^kd},i)=-16^kdi+16^kdi=0$. Thus, $(G\cup \overline{K_{8k}})+K_{16^kd}$ is a connected graph with independence number $\alpha+8k$ and an independence roots at $i$ and $-i$. 
\end{proof}

\begin{theorem}\label{thm:everyindnumber}
For all $\alpha\ge 4$, there exists a connected graph with independence number $\alpha$ and independence roots at $i$ and $-i$.
\end{theorem}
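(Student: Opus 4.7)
The plan is to reduce the problem to producing a finite collection of ``seed'' graphs, one for each residue class modulo~$8$ among $\{4, 5, \dots, 11\}$, and then to apply Lemma~\ref{lem:seedplus8k} to sweep each seed up through all larger independence numbers in the same residue class.

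For $\alpha = 4$ the seed is supplied directly by Theorem~\ref{thm:infmanyindnum4}: with $a, b, c, d$ as in that theorem and $H_4 = 4K_a + 3K_b + 2K_c$, the graph $H_4 + K_d$ is connected, has independence number $4$, and has $i$ as an independence root. For each $\alpha \in \{5, 6, 7, 8, 9, 10, 11\}$ I would construct a seed of the analogous Alavi-style form
\[
H_\alpha = \alpha K_{a_1} + (\alpha-1) K_{a_2} + \cdots + 2 K_{a_{\alpha-1}},
\]
which has independence number exactly $\alpha$, together with a positive integer $d_\alpha$ so that $H_\alpha + K_{d_\alpha}$ has $i$ as an independence root. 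Using the standard join identity,
\[
I(H_\alpha + K_{d_\alpha}, x) = \sum_{k=1}^{\alpha-1}(1 + a_k x)^{\alpha + 1 - k} + d_\alpha x - (\alpha - 2),
\]
so the vanishing of $I(H_\alpha + K_{d_\alpha}, i)$ splits into a single real-part Diophantine equation in $a_1, \dots, a_{\alpha-1}$ together with an imaginary-part equation that explicitly determines $d_\alpha$. Since $\alpha - 1 \ge 4$, there are enough free integer parameters to find a solution by a finite search. As an illustration, at $\alpha = 5$ the choice $(a_1, a_2, a_3, a_4) = (1, 4, 2, 12)$ satisfies the real-part equation and forces $d_5 = 222$; analogous small-parameter solutions can be exhibited in each of the six remaining cases.

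With all eight seeds in hand, Lemma~\ref{lem:seedplus8k} applied with $k = 0, 1, 2, \dots$ to each $H_\alpha$ yields connected graphs of every independence number of the form $\alpha + 8k$ having $\pm i$ as independence roots. The eight arithmetic progressions obtained by varying $\alpha \in \{4, \dots, 11\}$ together cover every integer at least $4$, which completes the proof.

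The principal obstacle is the construction of the seven new seeds. Unlike the $\alpha = 4$ case, which is underwritten by the Pell-like recursion of Lemma~\ref{lem:diophantine} and hence yields an entire infinite family at once, the seeds for $\alpha \in \{5, \dots, 11\}$ seem to require case-by-case search; I do not see a uniform identity that handles all seven simultaneously. Each individual case, however, reduces to a bounded integer search in several variables, so the verification is a routine finite check rather than a deep Diophantine problem.
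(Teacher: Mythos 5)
Your overall template---eight seeds, one per residue class in $\{4,\dots,11\}$, then Lemma~\ref{lem:seedplus8k} to propagate each seed along arithmetic progressions of step $8$---is precisely the structure of the paper's proof, and your $\alpha=4$ seed (taken from Theorem~\ref{thm:infmanyindnum4} with the joined $K_d$ removed) and your $\alpha=5$ worked example are both correct: I checked that $5K_1+4K_4+3K_2+2K_{12}+K_{222}$ does have $\pm i$ as independence roots. The gap is that the six remaining seeds ($\alpha=6,\dots,11$) are asserted rather than constructed. ``There are enough free integer parameters to find a solution by a finite search'' is a heuristic, not a proof: the real-part constraint is a single Diophantine equation, its solvability in positive integers is exactly the content the theorem demands, and there is no a~priori bound turning the search into a genuinely finite check. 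You also need to confirm, for each chosen tuple $(a_1,\dots,a_{\alpha-1})$, that $d_\alpha = -\operatorname{Im}\!\left(\sum_{k=1}^{\alpha-1}(1+a_k i)^{\alpha+1-k}\right)$ comes out positive, since otherwise $K_{d_\alpha}$ is not a graph; this held in your $\alpha=5$ case but must be rechecked case by case.

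The paper sidesteps the uniform Alavi-style family altogether and simply tabulates seven explicit seeds, several of which are not of your proposed form at all: for example $\overline{K_6}$ with $d=8$ for $\alpha=6$, the complete tripartite $K_{3,5,7}$ with $d=10$ for $\alpha=7$, and $\overline{K_8}+K_{16(6)}$ with $d=128$ for $\alpha=8$. Restricting to the Alavi shape is a legitimate but unnecessary constraint; it buys you a uniform-looking formula for $I(H_\alpha+K_{d_\alpha},x)$, but not any uniform solvability. To close the argument you should either exhibit concrete parameter tuples $(a_1,\dots,a_{\alpha-1},d_\alpha)$ with $d_\alpha>0$ for each of $\alpha=6,\dots,11$ and verify them by direct evaluation at $x=i$, or simply substitute the paper's table of seeds.
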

\begin{proof}
The proof involves giving a graph for each of $\alpha=4,5,6,7,8,9,10,11$ that will serve as a ``seed'' for each case of $\alpha\mod 8$. The result will then follow from Lemma~\ref{lem:seedplus8k}. The seed for $\alpha=4$ is given by any of the examples afforded by Theorem~\ref{thm:infmanyindnum4} by removing the $K_d$ joined to the graph. The seeds for $\alpha\ge 5$ are given in Table~\ref{tab:seeds} to complete the proof. Note that the graph $K_{n(k)}$ denotes the complete $n$-partite graph where each part has $k$ vertices, so that $\alpha(K_{n(k)})=k$.

\begin{table}[!h]
\begin{center}
\renewcommand\arraystretch{1.2}
\begin{tabular}{|c|c|c|c|}
\hline
$\alpha$ & $G$                        & $I(G,x)$          & $d$  \\ \hline
%4 & & & \\	\hline
5        & $K_2\cup K_3\cup\overline{K_3}$  &  $(1+2x)(1+3x)(1+x)^3$  & 20   \\ \hline
6        & $\overline{K_6}$               &  $(1+x)^6$    & 8    \\ \hline
7        & $K_{3,5,7}$                     & $(1+x)^3+(1+x)^5+(1+x)^7-2$    & 10   \\ \hline
8        & $\overline{K_8}+K_{16(6)}$ & $(1+x)^8+16(1+x)^6-16$  &  128 \\ \hline
9        &   $\overline{K_9}+K_{16(6)}$ & $(1+x)^9+16(1+x)^6-16$  & 112    \\ \hline
10       & $4K_2\cup 4K_3\cup\overline{K_2}$ & $(1+2x)^4(1+3x)^4(1+x)^2$  & 5000 \\ \hline
11       & $3K_2\cup 3K_3\cup\overline{K_5}$  &  $(1+2x)^3(1+3x)^3(1+x)^5$ & 2000 \\ \hline
\end{tabular}
\caption{Seed graphs $G$ for $5\le\alpha(G)\le 11$ and associated value of $d$.}\label{tab:seeds}
\end{center}
\end{table}

\end{proof}

\subsection{Graphs via Lexicographic Products}

Thus far, the only purely imaginary independence roots we have found have been $i$ and $-i$. To find other purely imaginary numbers as independence roots we need to turn to another graph operation. Given graphs $G$ and $H$ such that $V(G)=\{ v_1,v_2,...,v_n\}$ and $V(H)=\{ u_1,u_2,...,u_k\}$, the \textit{lexicographic product} (or graph substitution) of $G$ and $H$, which we will denote $G[H]$, is the graph such that $V(G[H] )=V(G)\times V(H)$ and $(v_i,u_l)\sim (v_j,u_m)$ if $v_i\sim _G v_j$ or $i=j$ and $u_l\sim _H u_m$. See Figure~\ref{fig:lexprod} for an example.  The graph $G[H]$, can be thought of as substituting a copy of $H$ for each vertex of $G$. 

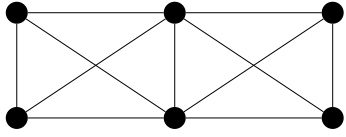
\begin{figure}[htp]
\def\c{0.7}
\def\r{1}
\centering
\scalebox{\c}{
\begin{tikzpicture}
\begin{scope}[every node/.style={circle,thick,draw,fill}]
    \node (1) at (-3*\r,0*\r) {};
    \node (2) at (0*\r,0*\r) {};
    \node (3) at (3*\r,0*\r) {};
    
    \node (4) at (-3*\r,-2*\r) {};
    \node (5) at (0*\r,-2*\r) {};
    \node (6) at (3*\r,-2*\r) {};
 
\end{scope}

\begin{scope}
    \path [-] (1) edge node {} (2);
    \path [-] (2) edge node {} (3);
    
    \path [-] (1) edge node {} (4);
    \path [-] (1) edge node {} (5);
    
    \path [-] (2) edge node {} (4);
    \path [-] (2) edge node {} (5);
    \path [-] (2) edge node {} (6);
    
    \path [-] (3) edge node {} (5);
    \path [-] (3) edge node {} (6);
    
    \path [-] (4) edge node {} (5);
    \path [-] (6) edge node {} (5);
  
\end{scope}

\end{tikzpicture}}
\caption{The lexicographic product $P_3[K_2]$.}\label{fig:lexprod}
\end{figure}

\begin{theorem}[\cite{INDROOTS}]\label{thm:subsformula}
For all graphs $G$ and $H$, $I(G[H],x)=I(G,I(H,x)-1)$.
\end{theorem}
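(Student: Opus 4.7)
The plan is to prove the identity by partitioning independent sets of $G[H]$ according to their projection onto $V(G)$, then translating the resulting combinatorial decomposition into a generating function identity.

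First, I would analyze the structure of an arbitrary independent set $S \subseteq V(G[H])$. Define the projection $T = \{v \in V(G) : (v,u) \in S \text{ for some } u \in V(H)\}$ and, for each $v \in T$, the fiber $S_v = \{u \in V(H) : (v,u) \in S\}$. By the adjacency rule in $G[H]$, any two distinct vertices $v_i, v_j \in T$ give rise to pairs $(v_i, u_l), (v_j, u_m) \in S$, and these are non-adjacent only when $v_i \not\sim_G v_j$; hence $T$ is an independent set of $G$. Likewise, within a single fiber, vertices $(v, u_l)$ and $(v, u_m)$ are non-adjacent iff $u_l \not\sim_H u_m$, so each $S_v$ is a nonempty independent set of $H$. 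Conversely, any choice of an independent set $T \subseteq V(G)$ together with a nonempty independent set $S_v \subseteq V(H)$ for each $v \in T$ assembles into a unique independent set of $G[H]$, giving a bijection.

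Next, I would turn this bijection into generating functions. The generating polynomial for nonempty independent sets of $H$ by cardinality is precisely $I(H,x) - 1$. For a fixed independent set $T \subseteq V(G)$ of size $m$, the fibers $\{S_v : v \in T\}$ are chosen independently, so the generating polynomial for independent sets of $G[H]$ projecting onto $T$ is $\bigl(I(H,x) - 1\bigr)^m$. Grouping independent sets $T \subseteq V(G)$ by their cardinality $m$ and summing gives
\begin{equation*}
I(G[H], x) \;=\; \sum_{m=0}^{\alpha(G)} s_m(G) \bigl(I(H,x) - 1\bigr)^m \;=\; I\bigl(G,\, I(H,x) - 1\bigr),
\end{equation*}
which is the desired identity.

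The main obstacle is essentially just bookkeeping in setting up the bijection cleanly; once one is confident that (a) $T$ must be independent in $G$, (b) each fiber must be independent in $H$, and (c) the fibers in distinct copies of $H$ can be chosen independently, the generating function manipulation is immediate. Nothing beyond the definition of the lexicographic product and the definition of $I(\cdot, x)$ is required.
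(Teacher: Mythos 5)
Your proof is correct. The paper does not supply its own proof of this theorem; it cites it from reference [3] (\cite{INDROOTS}), so there is no in-paper argument to compare against. Your bijective decomposition — projecting an independent set of $G[H]$ onto an independent set $T$ of $G$ together with nonempty independent fibers in the copies of $H$ indexed by $T$, then translating this into the substitution $x \mapsto I(H,x)-1$ — is the standard and natural argument for this identity and is exactly what one would expect the cited source to contain.
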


%Theorem~\ref{thm:subsformula} is helpful for finding independence roots that may only appear for graphs too large to compute and solve their independence polynomials in a reasonable amount of time. 
It follows that if $z_1,z_2,\ldots,z_k$ are the independence roots of $G$, then the independence roots of $G[H]$ are all solutions to the equations $i(H,x)-1=z_i$ for all $i=1,2,\ldots,k$. If we let $H=K_n$, then the independence roots of $G[H]$ are $\frac{z_1}{n},\frac{z_2}{n},\ldots,\frac{z_k}{n}$. This leads to the following.

\begin{lemma}\label{lem:itoiovern}
Let $n$ be a positive integer. If $G$ has independence roots at $i$ and $-i$, then $G[K_n]$ has independence roots at $\frac{i}{n}$ and $-\frac{i}{n}$.
\end{lemma}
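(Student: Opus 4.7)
The plan is to invoke Theorem~\ref{thm:subsformula} directly with $H = K_n$ and chase through the effect on roots. Since $K_n$ has independence number $1$, its independence polynomial is simply $I(K_n,x) = 1 + nx$, so $I(K_n,x) - 1 = nx$. Plugging this into the substitution identity yields
\[
I(G[K_n], x) \;=\; I(G,\, I(K_n,x)-1) \;=\; I(G,\, nx).
\]

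From this identity, the roots of $I(G[K_n], x)$ are in bijection with the roots of $I(G, y)$ via the substitution $y = nx$, i.e.\ $x = y/n$. In particular, every independence root $z$ of $G$ gives rise to an independence root $z/n$ of $G[K_n]$. Applying this observation to $z = i$ and $z = -i$ (which are independence roots of $G$ by hypothesis) immediately produces $i/n$ and $-i/n$ as independence roots of $G[K_n]$, which is the desired conclusion.

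There is no real obstacle here; the lemma is essentially a direct corollary of Theorem~\ref{thm:subsformula} together with the elementary fact that $I(K_n, x) - 1 = nx$. The only thing to be mindful of is the observation the authors emphasized earlier: because $I(G, x)$ has real coefficients, the conjugate root $-i/n$ comes for free once one produces $i/n$, so it suffices to verify $i/n$ is a root. In the write-up I would keep the argument to just a few lines, stating the identity $I(G[K_n],x) = I(G,nx)$, noting the substitution $y = nx$ that transforms roots, and concluding.
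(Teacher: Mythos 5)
Your proof is correct and is essentially the same argument the paper uses: the paper establishes the lemma by noting, just before stating it, that Theorem~\ref{thm:subsformula} with $H=K_n$ gives $I(G[K_n],x)=I(G,nx)$, so the independence roots of $G[K_n]$ are exactly the roots of $I(G,\cdot)$ divided by $n$. Your write-up simply makes that reasoning explicit.
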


\begin{theorem}\label{thm:infmanyiovernroots}
For all $n\in\mathbb{Z}\setminus\{0\}$ and $\alpha\ge 4$, there exists a connected graph with independence number $\alpha$ and independence roots at $\frac{i}{n}$ and $-\frac{i}{n}$.  
\end{theorem}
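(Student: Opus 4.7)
The plan is to combine the results already established. By Theorem~\ref{thm:everyindnumber}, for every $\alpha \geq 4$ there exists a connected graph $G$ with independence number $\alpha$ having independence roots at $i$ and $-i$. The strategy is then to apply Lemma~\ref{lem:itoiovern} to the graph $G[K_n]$ (for positive $n$) to scale these roots to $\pm i/n$, and to verify that the resulting graph still has the required connectedness and independence number.

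First, I would reduce to the case $n > 0$: since $\pm i/n = \mp i/(-n)$ and the set of independence roots of any graph is closed under complex conjugation (because independence polynomials have real coefficients), producing $\pm i/|n|$ as independence roots automatically gives us $\pm i/n$ for negative $n$ as well. So assume $n \geq 1$.

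Next, take $G$ to be the connected graph with independence number $\alpha$ and independence roots at $\pm i$ supplied by Theorem~\ref{thm:everyindnumber}, and consider $H = G[K_n]$. By Lemma~\ref{lem:itoiovern}, $H$ has independence roots at $\pm i/n$, so it remains to check (i) $\alpha(H) = \alpha$ and (ii) $H$ is connected. Both are immediate from the structure of the lexicographic product. An independent set of $H$ can contain at most one vertex from each copy of $K_n$ (since each copy is a clique), and the projections of these vertices to $V(G)$ must form an independent set of $G$; conversely any independent set of $G$ lifts to an independent set of $H$ of the same size by choosing one vertex from each corresponding clique. Thus $\alpha(H) = \alpha(G) \cdot \alpha(K_n) = \alpha \cdot 1 = \alpha$. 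For connectedness, any edge of $G$ between $v_i$ and $v_j$ yields all edges between the corresponding copies of $K_n$ in $H$, so a spanning walk in $G$ extends to a spanning walk in $H$.

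There is no real obstacle here; the proof is essentially a bookkeeping verification that the lexicographic product with $K_n$ preserves both connectedness and independence number while scaling the independence roots by a factor of $1/n$. The technical heavy lifting was done in Theorem~\ref{thm:everyindnumber} and Lemma~\ref{lem:itoiovern}, and this theorem is the clean payoff.
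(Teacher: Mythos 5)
Your proof is correct and follows essentially the same route as the paper: take the graph from Theorem~\ref{thm:everyindnumber}, form the lexicographic product with $K_n$, and apply Lemma~\ref{lem:itoiovern}. Your explicit reduction to $n>0$ and the bookkeeping for $\alpha(G[K_n])=\alpha(G)$ and connectivity are details the paper states without elaboration, but the argument is the same.
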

\begin{proof}
The proof follows by taking a connected graph with the desired independence number and independence roots at $i$ and $-i$ from Theorem~\ref{thm:everyindnumber} and taking the lexicographic product of them with $K_n$, so that from Lemma~\ref{lem:itoiovern}, the resulting graph has independence roots at $\frac{i}{n}$ and $-\frac{i}{n}$. Since $G[K_n]$ preserves the independence number and connectivity of $G$, the resulting graph has the desired properties.
\end{proof}

%We have now found infinitely many graphs purely imaginary independence roots, but these graphs have very limited structure. In fact, every induced subgraph is isomorphic to $K_{n}+\overline{K_m}$ for some $n,m\ge 0$. We now turn to showing that every graph is an induced subgraph of a connected graph with purely imaginary independence roots. 

Now that we have found infinitely many rational purely imaginary numbers that are independence roots, it is natural to wonder if we have found them all. The next result ensures that Theorem~\ref{thm:infmanyiovernroots} does indeed locate all rational purely imaginary independence roots.

\begin{proposition}\label{prop:rationalpureimroots}
Let $b\in\mathbb{Q}$. Then $bi$ is an independence root of some graph if and only if $b=\frac{1}{n}$ for some nonzero integer $n$. Moreover, if $|b|>1$, then neither $bi$ nor $\sqrt{b}i$ is not an independence root of any graph. 
\end{proposition}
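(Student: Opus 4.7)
The plan is to reduce both assertions to the rational root theorem applied to the even part of the independence polynomial. The key observation I would exploit is that $I(G,x)=\sum_k s_k x^k$ has nonnegative integer coefficients with constant term $s_0=1$ (the empty set being the unique independent set of size $0$), so the polynomial $I_{even}(G,x)=\sum_k s_{2k} x^k$ also has integer coefficients and constant term $1$. By the rational root theorem, any rational root of $I_{even}(G,x)$, written in lowest terms, must have numerator $\pm 1$.

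For the forward direction of the equivalence, I would let $bi$ be an independence root of some graph $G$ with $b\in\mathbb{Q}$, $b\neq 0$. The observation at the start of Section~2 then yields that $-b^2$ is a rational root of $I_{even}(G,x)$. Writing $b=p/q$ in lowest terms gives $\gcd(p^2,q^2)=1$, so $-p^2/q^2$ is already in lowest terms; the rational root theorem then forces $p^2 \mid 1$, hence $p=\pm 1$ and $b=\pm 1/q = 1/n$ for some nonzero integer $n$. The converse is already established by Theorem~\ref{thm:infmanyiovernroots}.

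For the moreover, the claim about $bi$ is immediate from the equivalence just proved: no $b\in\mathbb{Q}$ with $|b|>1$ has the form $1/n$. For $\sqrt{b}\,i$, I would argue by contradiction: if $\sqrt{b}\,i$ were an independence root of some graph, the same observation would give $-(\sqrt{b})^2 = -b$ as a root of $I_{even}$. Writing $b=p/q$ in lowest terms and applying the rational root theorem again yields $p=\pm 1$, so $|b|=1/|q|\leq 1$, contradicting $|b|>1$.

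There is no genuine obstacle here beyond noticing that the constant term $s_0=1$ constrains the numerators of rational roots of $I_{even}(G,x)$; once that is seen, both parts reduce to short, essentially identical applications of the rational root theorem.
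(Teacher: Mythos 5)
Your proof is correct and takes essentially the same approach as the paper: reduce to the rational root theorem applied to $I_{even}(G,x)$, whose constant term is $s_0=1$. Your version is slightly more careful in explicitly noting that $\gcd(p,q)=1$ implies $\gcd(p^2,q^2)=1$, so $-p^2/q^2$ is in lowest terms — a detail the paper's proof glides over — but the argument is otherwise identical.
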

\begin{proof}
The existence of rational purely imaginary roots of the form $\tfrac{i}{n}$ follows from Theorem~\ref{thm:infmanyiovernroots}. If $bi$ is an independence root of $G$ for a rational number $b$, then $-b^2$ is a rational root of $I_{even}(G,x)$. By the Rational Roots Theorem, any rational root must have numerator which divides the constant term of $I_{even}(G,x)$, which is $1$ for every graph $G$. Therefore, $-b^2=\frac{-1}{m}$ for some integer positive integer $m$. Therefore, $b$ must equal $\frac{1}{n}$ for some nonzero integer $n$. This argument also shows that if $|b|>1$, then $bi$ is not an independence root of any graph. 

Finally, if $|b|>1$ and $\sqrt{b}i$ is an independence root, then $-b$ must be a root of the even part of the corresponding independence polynomial. Therefore, the numerator of $b$ must divide $1$, so $|b|\le 1$, a contradiction.
\end{proof}

\subsection{Graphs via Coronas}

%Proposition~\ref{prop:purelyimroots} also provides the blueprint for finding independence roots at $i$ and $-i$ even when the calculations are not quite as nice as in Proposition~\ref{prop:infpureimroots}. 
%Since joining a clique to a graph only changes the coefficient of $x$ in its independence polynomial and the change is an increase by a positive integer, we get the following. 
%
%\begin{proposition}
%If $G$ is a graph such that $I_{even}(G,-1)=0$ and $I_{odd}(G,-1)=-m$ for some positive integer $m$, then $G+K_{m}$ has independence roots at $i$ and $-i$. \hfill\qed
%\end{proposition}
%
%We now seek graphs such that $I_{even}(G,-1)=0$ and $I_{odd}(G,-1)=-m$.
%%, places to look are graphs with palindromic independence polynomial and independence number congruent to $2$ mod $4$ (by palindromic independence polynomial, we mean that $i_k=i_{\alpha-k}$ for all $k$ ). Palindromic independence polynomials were investigated by Stevanovi\'{c} in \cite{Stevanovic1998}, where constructions were provided. 
%One construction is obtained by the corona graph product. 
The \textit{corona} of a graph $G$ with a graph $H$, denoted $G\circ H$, is obtained by starting with the graph $G$, and for each vertex $v$ of $G$, joining a new copy of $H$ to $v$. For example, the star $K_{1,n}$ can be thought of as $K_1\circ \overline{K_n}$.  See Figure~\ref{fig:corona} for another example. In this section, we will use the corona to show that every graph is an induced subgraph of a connected graph with purely imaginary independence roots.

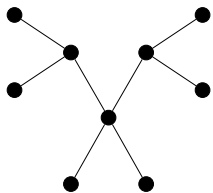
\begin{figure}[htp]
\def\c{0.5}
\def\r{1}
\centering
\scalebox{\c}{
\begin{tikzpicture}
\begin{scope}[every node/.style={circle,thick,draw,fill}]
    \node (1) at (-1*\r,0*\r) {};
    \node (2) at (1*\r,0*\r) {};
    \node (3) at (0*\r,-1.73205*\r) {};
    
    \node (4) at (2.5*\r,1*\r) {};
    \node (5) at (2.5*\r,-1*\r) {};
    
    \node (6) at (1*\r,-3.5*\r) {};
    \node(7) at (-1*\r,-3.5*\r) {};
    
    \node (8) at (-2.5*\r,1*\r) {};  
    \node (9) at (-2.5*\r,-1*\r) {};  
\end{scope}

\begin{scope}
    %\path [-] (1) edge node {} (2);
    \path [-] (1) edge node {} (3);
    \path [-] (2) edge node {} (3);
    
    \path [-] (2) edge node {} (4);
    \path [-] (2) edge node {} (5);
    
    \path [-] (1) edge node {} (8);
    \path [-] (1) edge node {} (9);
    
    \path [-] (3) edge node {} (6);
    \path [-] (3) edge node {} (7);
\end{scope}

\end{tikzpicture}}
\caption{The graph $P_3\circ\overline{K_2}$}\label{fig:corona}
\end{figure}

\begin{theorem}[\cite{Gutman1992}]\label{thm:coronaformula}
If $G$ and $H$ are graphs and $G$ has order $n$, then $$I(G\circ H,x)=I\left(G,\tfrac{x}{i(H,x)}\right)i(H,x)^n.$$
\end{theorem}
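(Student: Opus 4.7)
The plan is to prove the identity by a direct enumeration of independent sets in $G\circ H$. Write $V(G)=\{v_1,\dots,v_n\}$ and let $H_i$ denote the copy of $H$ attached to $v_i$, so that $V(G\circ H)=V(G)\cup\bigcup_{i=1}^n V(H_i)$ is a partition of the vertex set, with each $v_i$ completely joined to $V(H_i)$ and no edges between different $H_i$'s or between $H_i$ and $V(G)\setminus\{v_i\}$.

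The key structural observation is how independent sets of $G\circ H$ decompose. A set $S\subseteq V(G\circ H)$ is independent if and only if $A:=S\cap V(G)$ is independent in $G$, each $S\cap V(H_i)$ is independent in $H_i$, and moreover $S\cap V(H_i)=\emptyset$ whenever $v_i\in A$ (because $v_i$ is adjacent to every vertex of $H_i$). Equivalently, once an independent set $A\subseteq V(G)$ is fixed, each of the $n-|A|$ copies of $H$ attached to a vertex outside $A$ independently contributes an arbitrary independent subset of $H$, while the copies attached to vertices of $A$ must be empty.

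From this, I would group the generating polynomial of independent sets of $G\circ H$ according to the choice of $A$. Each independent $A$ of size $j$ contributes weight $x^j$ from $A$ itself together with a factor $I(H,x)^{n-j}$ from the $n-j$ free copies of $H$, giving
\[
I(G\circ H,x)=\sum_{\substack{A\subseteq V(G)\\ A\text{ indep}}} x^{|A|} I(H,x)^{n-|A|}=I(H,x)^n\sum_{j\ge 0}s_j(G)\left(\tfrac{x}{I(H,x)}\right)^j,
\]
and the right-hand side is precisely $I(H,x)^n\cdot I(G,x/I(H,x))$.

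I do not expect any real obstacle here: the argument is a routine enumeration of the standard "substitution/corona" type. The only mild technicality is that $I(G,x/I(H,x))$ is a priori a rational function, but since $I(G,\,\cdot\,)$ has degree at most $n=|V(G)|$ and we multiply through by $I(H,x)^n$, all denominators clear and the identity becomes an honest polynomial identity. A cleanest write-up might present the formula first in the cleared form $\sum_j s_j(G)\, x^j\, I(H,x)^{n-j}$ and then reshape it into the stated expression, which also avoids any concern about the root locus of $I(H,x)$.
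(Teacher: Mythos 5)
Your enumeration is correct and complete. Note that the paper itself offers no proof of this statement: it is quoted directly from \cite{Gutman1992}, so there is no internal argument to compare against. Your decomposition---fixing the independent set $A$ in the base copy of $G$, observing that each pendant copy of $H$ attached to a vertex of $A$ must be empty while each copy attached to a vertex outside $A$ contributes a free factor of $I(H,x)$, and then factoring out $I(H,x)^n$---is the standard combinatorial derivation of the corona formula, essentially the same one that appears in Gutman's original paper. Your closing remark about clearing denominators is the right way to handle the apparent division by $I(H,x)$, and you correctly interpret the paper's lowercase $i(H,x)$ as the independence polynomial $I(H,x)$ (the two are used interchangeably elsewhere in the note, e.g.\ in the proof of Proposition~\ref{prop:3mod4}).
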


%\begin{theorem}[\cite{Stevanovic1998}]
%Let $G$ be a graph. Then $I(G\circ\overline{K_2},x)$ is palindromic.
%\end{theorem}

\begin{proposition}\label{prop:3mod4}
Let $G$ be a graph of order $n$ with $n\equiv 3\ (\textnormal{mod}\ 4)$ and let $m$ be the integer $2^nI(G,\frac{1}{2})$. Then $(G\circ\overline{K_2})+K_m$ has independence roots at $i$ and $-i$.
\end{proposition}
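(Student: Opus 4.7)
The plan is to apply the corona formula of Theorem~\ref{thm:coronaformula} with $H=\overline{K_2}$, evaluate at $x=i$, and observe that the modular condition $n\equiv 3\pmod{4}$ together with the identity $(1+i)^2=2i$ produces an imaginary value that the join with $K_m$ cancels.

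First I would verify that $m=2^n I(G,\tfrac{1}{2})=\sum_{k=0}^{\alpha(G)} s_k 2^{n-k}$ is a positive integer, which holds because $n\ge \alpha(G)$ and $s_0=1$; hence $K_m$ is a valid graph. Since $I(\overline{K_2},x)=(1+x)^2$, Theorem~\ref{thm:coronaformula} gives
$$I(G\circ\overline{K_2},x)=I\!\left(G,\tfrac{x}{(1+x)^2}\right)(1+x)^{2n}.$$
At $x=i$, the crucial simplification is $(1+i)^2=2i$, so $\tfrac{i}{(1+i)^2}=\tfrac{1}{2}$, while $(1+i)^{2n}=(2i)^n=2^n i^n$. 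Using $n\equiv 3\pmod{4}$ we get $i^n=-i$, and therefore
$$I(G\circ\overline{K_2},i)=I(G,\tfrac{1}{2})\cdot(-2^n i)=-mi.$$

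Finally, the standard join identity $I(G_1+G_2,x)=I(G_1,x)+I(G_2,x)-1$ applied with $I(K_m,x)=1+mx$ yields
$$I((G\circ\overline{K_2})+K_m,i)=-mi+(1+mi)-1=0.$$
Thus $i$ is an independence root of $(G\circ\overline{K_2})+K_m$, and since independence polynomials have real coefficients, so is $-i$. No step is genuinely difficult; the only point requiring care is matching the modular condition on $n$ with the power of $i$ coming from $(1+i)^{2n}$. The choice $n\equiv 3\pmod{4}$ is precisely what forces the coefficient $-2^n i^n$ to equal $-2^n i$ with the correct sign, so that the positive integer $m=2^nI(G,\tfrac{1}{2})$ appears naturally; any other residue class would either give the wrong sign for $m$ or introduce a real term that cannot be killed by adjoining a clique.
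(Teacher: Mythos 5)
Your proof is correct and follows the same overall strategy as the paper: apply the corona formula at $x=i$ to get $I(G\circ\overline{K_2},i)=-mi$, then cancel with the $K_m$ contribution from the join. The one place you diverge is the middle computation: you observe directly that $\tfrac{i}{(1+i)^2}=\tfrac{1}{2}$ is real, so $I\bigl(G,\tfrac{i}{(1+i)^2}\bigr)=I(G,\tfrac12)$ is a real number and the entire imaginary content comes from $(1+i)^{2n}=(2i)^n=-2^n i$; the paper instead expands $I(G\circ\overline{K_2},i)=\sum_k s_k i^k(1+i)^{2(n-k)}$ and verifies via a case table that each term equals $-2^{n-k}i$. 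Your version is cleaner and makes the role of the congruence $n\equiv 3\pmod 4$ transparent (it just fixes the power of $i$), whereas the paper's coefficient-by-coefficient table obscures the fact that the argument of $I(G,\cdot)$ is simply $\tfrac12$.
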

\begin{proof}
Let $n\equiv 3\ (\textnormal{mod}\ 4)$, $G$ be a graph of order $n$, and $m=2^nI(G,\frac{1}{2})$. We now have,

\begin{align*}
I(G\circ\overline{K_2},i)&=I\left(G,\frac{i}{(1+i)^2}\right)(1+i)^{2n}\ \ \ \ \text{(by Theorem~\ref{thm:coronaformula})}\\
&=\sum_{k=0}^{\alpha(G)}s_ki^k(1+i)^{2(n-k)}\\
&=\sum_{k=0}^{\alpha(G)}s_k(-2^{n-k}i) \ \ \ \ \ \ \ \ \ \ \ \ \ \ \ \ \ \ \ \ \ \ \ \ \text{(by Table~\ref{tab:negimvalues}})\\
&=-2^ni\cdot I(G,\tfrac{1}{2})\\
&=-mi.
\end{align*}                                                                                                        

Therefore, 
$$I((G\circ\overline{K_2})+K_m,i)=I(G\circ\overline{K_2},i)+mi=0.$$
Hence, $(G\circ\overline{K_2})+K_m$ has independence roots at $i$ and $-i$.
\end{proof}

\begin{table}[h]
\begin{center}
\begin{tabular}{|c|c|c|c|c|}
\hline
$k\ (\operatorname{mod } 4)$ & $i^k$ & $2(n-k)\ (\operatorname{mod } 4)$ & $(i+1)^{2(n-k)}$ & $i^k(i+1)^{2(n-k)}$ \\ \hline
0                            & 1     & 3                                 & $-2^{n-k}i$      & $-2^{n-k}i$        \\ \hline
1                            & $i$   & 2                                 & $-2^{n-k}$          & $-2^{n-k}i$        \\ \hline
2                            & $-1$  & 1                                 & $2^{n-k}i$     &  $-2^{n-k}i$       \\ \hline
3                            & $-i$  & 0                                 & $2^{n-k}$           & $-2^{n-k}i$        \\ \hline
\end{tabular}
\caption{Values for $i^k(i+1)^{2(n-k)}$ for $n\equiv 3\ (\operatorname{mod }4)$.}\label{tab:negimvalues}
\end{center}   
\end{table}

\begin{theorem}\label{thm:everygraphsubgraph}
Let $k\in \mathbb{Z}$ be nonzero. Then every graph is an induced subgraph of a connected graph with $\frac{i}{k}$ and  $-\frac{i}{k}$ as independence roots.
\end{theorem}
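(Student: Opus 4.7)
The plan is to combine Proposition~\ref{prop:3mod4} with the scaling trick afforded by Lemma~\ref{lem:itoiovern}. Let $H$ be the graph we wish to embed and let $k\in\mathbb{Z}\setminus\{0\}$. Since $\{i/k,-i/k\}=\{i/|k|,-i/|k|\}$, it suffices to treat the case $k>0$.

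First I would prepare $H$ for Proposition~\ref{prop:3mod4} by padding it with isolated vertices. Set $H'=H\cup \overline{K_r}$ for the unique $r\in\{0,1,2,3\}$ with $|V(H)|+r\equiv 3\pmod 4$; this produces a graph of order $\equiv 3\pmod 4$ in which $H$ still sits as an induced subgraph. Applying Proposition~\ref{prop:3mod4} to $H'$, with $m=2^{|V(H')|}I(H',\tfrac{1}{2})$, then yields the connected graph $G'=(H'\circ\overline{K_2})+K_m$ having $\pm i$ as independence roots.

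Next, to convert the roots from $\pm i$ to $\pm i/k$, form the lexicographic product $G''=G'[K_k]$. By Lemma~\ref{lem:itoiovern}, $G''$ has $\pm i/k$ as independence roots. Connectedness passes from $G'$ to $G''$ because $G'$ is connected and has at least one edge, so any two blobs can be joined by a path, and picking a single vertex from each $K_k$-blob recovers an induced copy of $G'$ inside $G''$ (adjacency between distinct blobs is governed exactly by $E(G')$).

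The only subtlety is verifying that $H$ really survives as an induced subgraph through the four operations $\cup\overline{K_r}$, $\circ\overline{K_2}$, $+K_m$, and $[K_k]$. Each of these only attaches new vertices outside $V(H)$ and never adds or removes an edge inside $V(H)$, so the checks are routine rather than a genuine obstacle. The theorem is essentially just the composition of Proposition~\ref{prop:3mod4} and Lemma~\ref{lem:itoiovern}, with the padding step smoothing over the order-mod-$4$ hypothesis.
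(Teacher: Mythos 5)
Your proof is correct and follows essentially the same route as the paper's: pad to reach order $\equiv 3 \pmod 4$, invoke Proposition~\ref{prop:3mod4} to get $\pm i$ as roots, then scale by $[K_k]$ via Lemma~\ref{lem:itoiovern}. The only cosmetic difference is that you pad with a disjoint union of isolated vertices while the paper pads with a join to a small complete graph; both are valid since Proposition~\ref{prop:3mod4} only needs some order-$\equiv 3$ supergraph containing the target as an induced subgraph.
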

\begin{proof}
Let $G$ be a graph or order $n$ with $n\equiv \ell\ (\operatorname{mod } 4)$, $0\le \ell \le 3$. Let $H$ be a graph of order $n+(3-\ell)$ such that $G$ is an induced subgraph of $H$ (note that many exist, for instance $H=G+K_{3-\ell}$). Now $n+(3-\ell)\equiv 3\ (\operatorname{mod } 4)$ so from Proposition~\ref{prop:3mod4}, $(H\circ \overline{K_2})+K_m$ has independence roots at $i$ and $-i$ for some $m\ge 1$. Therefore $G$ is an induced subgraph of a graph with independence roots at $i$ and $-i$. Now from Lemma~\ref{lem:itoiovern}, $\left((H\circ \overline{K_2})+K_m\right)[K_k]$ has independence roots at $\frac{i}{k}$ and $-\frac{i}{k}$. Finally, the subset of vertices formed by taking one vertex from each of the cliques substituted into each vertex of the induced copy of $G$ in $(H\circ \overline{K_2})$ induces a copy of $G$ in  $\left((H\circ \overline{K_2})+K_m\right)[K_k]$. 
\end{proof} 

\section{Conclusion}
We remark that the graphs found in Theorem~\ref{thm:everygraphsubgraph} can be extremely large. For example, if $G$ is the tree in Figure~\ref{fig:treewithlargeclique} of order $n=14$, then the smallest connected graph from our construction that contains $G$ as an induced subgraph with purely imaginary independence roots will have order $1509397$.

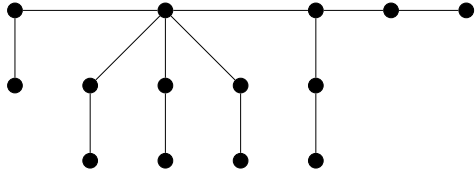
\begin{figure}[htb]
\centering
\def \n {8}
\def \r {2}
\def \radius {3cm}
\def \margin {8} % margin in angles, depends on the radius
\scalebox{0.5}{
\begin{tikzpicture}
\begin{scope}[every node/.style={circle,thick,draw,fill}]
    \node (14) at (0*\r,3*\r) {};
    \node (2) at (-1*\r,2*\r) {};
    \node (3) at (0*\r,2*\r) {};
    \node (4) at (1*\r,2*\r) {};
    \node (9) at (-1*\r,1*\r) {};
    \node (10) at (0*\r,1*\r) {};
    \node (11) at (1*\r,1*\r) {};
    
    \node (1) at (-2*\r,3*\r) {};
    \node (8) at (-2*\r,2*\r) {};
    \node (5) at (2*\r,3*\r) {};
    \node (12) at (2*\r,2*\r) {};
    \node (6) at (2*\r,1*\r) {};
    \node (13) at (3*\r,3*\r) {};
    \node (7) at (4*\r,3*\r) {};
 
\end{scope}

\begin{scope}
    \path [-] (14) edge node {} (2);
    \path [-] (14) edge node {} (3);        
    \path [-] (14) edge node {} (4);
    \path [-] (2) edge node {} (9);
    \path [-] (3) edge node {} (10);    
    \path [-] (4) edge node {} (11);
    
    \path [-] (1) edge node {} (14);
	\path [-] (1) edge node {} (8);    
    
    \path [-] (14) edge node {} (5);        
    \path [-] (12) edge node {} (5);
    \path [-] (12) edge node {} (6);
    \path [-] (5) edge node {} (13);    
    \path [-] (7) edge node {} (13);
    
\end{scope}

\end{tikzpicture}}
\caption{Tree whose smallest known connected supergraph with purely imaginary roots has order $1509397$.}\label{fig:treewithlargeclique}
\end{figure}

Although this tree is a subgraph of a graph with purely imaginary independence roots, we have yet to see any trees with purely imaginary independence roots. In fact, every graph mentioned above with purely imaginary independence roots has universal vertices and therefore many cycles. It is tempting to conjecture that every graph with purely imaginary roots must have a universal vertex and therefore that no trees have purely imaginary independence roots. However, for every graph with purely imaginary independence roots a corresponding tree can be constructed that also has these purely imaginary independence roots. This is done using a fascinating result due to Bencs \cite{Bencs2018}.

\begin{proposition}[\cite{Bencs2018}]
If $G$ is a connected graph, then there exists a tree $T$ and a sequence of induced subgraphs of $G$, $G_1,G_2,\ldots, G_k$, such that 
$$I(T,x)=I(G,x)I(G_1,x)I(G_2,x)\cdots I(G_k,x). $$
\end{proposition}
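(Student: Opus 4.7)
The plan is to prove this by strong induction on $|V(G)|$, with the base case $G = K_{1}$ being trivial (take $T = G$ and the empty product). For the inductive step, the strategy is to establish a Christoffel--Darboux type identity for the independence polynomial analogous to Godsil's classical identity for matching polynomials, then unwind it recursively. Specifically, for any vertex $v \in V(G)$, one would construct a tree $T(G,v)$ from $G$ (a natural candidate is the ``path tree'' whose vertices are the simple paths in $G$ starting at $v$, with two such paths joined by an edge if one extends the other by one vertex) and prove the quotient identity
\[
\frac{I(G,x)}{I(G-v,x)} \;=\; \frac{I(T(G,v),x)}{I(T(G,v)-v,x)},
\]
or equivalently the product identity
\[
I(G,x)\cdot I(T(G,v)-v,x) \;=\; I(T(G,v),x)\cdot I(G-v,x).
\]

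Assuming such an identity, the recursion runs as follows. Since $T(G,v)$ is a tree by construction, $T(G,v)-v$ is a forest whose components $T_{1},\ldots,T_{r}$ are themselves trees, so $I(T(G,v)-v,x) = \prod_{j} I(T_{j},x)$. On the other hand, $G-v$ decomposes into connected components $H_{1},\ldots,H_{s}$, each an induced subgraph of $G$ on strictly fewer vertices, so the inductive hypothesis applies to each $H_{i}$: there is a tree $\widetilde{T}_{i}$ and induced subgraphs $H_{i,1},\ldots,H_{i,\ell_{i}}$ of $H_{i}$ (hence of $G$) with $I(\widetilde{T}_{i},x)=I(H_{i},x)\prod_{k} I(H_{i,k},x)$. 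Substituting into the identity and clearing denominators, one obtains a relation of the form
\[
I(T(G,v),x)\cdot\prod_{i}I(\widetilde{T}_{i},x) \;=\; I(G,x)\cdot\prod_{j} I(T_{j},x)\cdot\prod_{i,k}I(H_{i,k},x).
\]
The left-hand side is the independence polynomial of the disjoint union of trees $T(G,v)\sqcup\widetilde{T}_{1}\sqcup\cdots\sqcup\widetilde{T}_{s}$, which is itself a forest; the forest can then be glued into a single tree $T$ by identifying roots or attaching pendant edges in a way that multiplies the independence polynomial by $I$-values of trivial induced subgraphs (or by handling the forest case in the statement directly, then noting that any forest has the same independence polynomial as some tree after suitable surgery).

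The main obstacle is establishing the Christoffel--Darboux type identity above: the matching-polynomial version is Godsil's theorem, but the independence-polynomial analog is more delicate because the relevant deletion identity $I(G,x)=I(G-v,x)+xI(G-N[v],x)$ removes an entire closed neighbourhood rather than just a single vertex, so the combinatorics of paths starting at $v$ must be replaced by a more careful tree construction accounting for neighbourhoods. A secondary (but technical) obstacle is the final tree-from-forest step: one must ensure that the extra induced subgraphs introduced when splicing the forest components together are genuinely induced subgraphs of $G$, not merely of some auxiliary graph. Once the CD-type identity is in hand, both the induction and the bookkeeping of induced subgraphs are routine.
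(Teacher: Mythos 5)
The paper does not actually prove this proposition; it is cited directly from Bencs (2018) with the remark that ``the construction is beyond the scope of this note.'' So there is no internal proof to compare against, and your proposal must stand on its own. Your high-level strategy is, in fact, the right one: Bencs's paper is literally titled ``Christoffel--Darboux type identities for the independence polynomial,'' and the argument does go through a path-tree-like construction (Bencs calls it a stable-path tree) and a quotient identity of essentially the shape you write down, with an induction on the number of vertices.

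That said, there is a concrete gap in the bookkeeping that prevents your induction from closing. In your rearranged identity
\[
I(T(G,v),x)\cdot\prod_{i}I(\widetilde{T}_{i},x) \;=\; I(G,x)\cdot\prod_{j} I(T_{j},x)\cdot\prod_{i,k}I(H_{i,k},x),
\]
the factors $I(T_j,x)$ on the right are independence polynomials of the components of $T(G,v)-v$. Those components are trees, typically much larger than $G$, and are \emph{not} a priori induced subgraphs of $G$. So the right-hand side does not have the form $I(G,x)\cdot\prod I(G_i,x)$ that the proposition demands, and simply renaming the left-hand side as $I(T,x)$ for a glued-up tree $T$ does not fix this. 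What is missing is the structural fact --- which Bencs's stable-path-tree construction is engineered to deliver --- that each component $T_j$ of $T(G,v)-v$ is itself a stable-path tree $T(G'_j,w_j)$ of an induced subgraph $G'_j\subsetneq G$; only then can you apply the inductive hypothesis to $T_j$ to replace $I(T_j,x)$ by $I(G'_j,x)\prod_k I(G'_{j,k},x)$ (induced subgraphs of $G$) and obtain the desired form. Without identifying the precise tree construction and proving this recursive decomposition of $T(G,v)-v$, the induction does not go through. Your second proposed route --- inducting via the components $H_i$ of $G-v$ alone --- cannot eliminate the unwanted $I(T_j,x)$ factors, and the ``glue the forest into a tree'' step is also left vague (one must check that the extra induced subgraphs introduced by the splicing are induced subgraphs of $G$ specifically, as you yourself flag). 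In short: the Christoffel--Darboux identity is the right lever, but you also need the recursive self-similarity of $T(G,v)-v$, and that is precisely the part you have not supplied.
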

Note that the proof is constructive, but the construction is beyond the scope of this note so we direct (and encourage) the interested reader to see \cite{Bencs2018}. We do note that $T$ can be much larger than $G$. This proposition and Theorem~\ref{thm:infmanyiovernroots} also yield the following corollary.

\begin{corollary}
For all $n\in\mathbb{Z}\setminus\{0\}$, there are infinitely many trees with independence roots at $\frac{i}{n}$ and $-\frac{i}{n}$.  
\end{corollary}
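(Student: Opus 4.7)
The plan is to combine Theorem~\ref{thm:infmanyiovernroots} with the preceding proposition of Bencs to pass from graphs with $\pm i/n$ as independence roots to trees with the same property, and then argue that varying a parameter yields infinitely many distinct trees.

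First, I would apply Theorem~\ref{thm:infmanyiovernroots}: for each integer $\alpha \geq 4$, it produces a connected graph $G_\alpha$ of independence number $\alpha$ having $\pm i/n$ as independence roots. Second, for each such connected $G_\alpha$ I would invoke the Bencs proposition to obtain a tree $T_\alpha$ and induced subgraphs $G_{\alpha,1},\ldots,G_{\alpha,k_\alpha}$ of $G_\alpha$ with
\[
I(T_\alpha,x) \;=\; I(G_\alpha,x)\,\prod_{j=1}^{k_\alpha} I(G_{\alpha,j},x).
\]
Since $I(G_\alpha, i/n) = 0$, this product vanishes at $x = i/n$, so $i/n$ is an independence root of $T_\alpha$; the same holds for $-i/n$, either by the identical argument applied at $-i/n$ or by noting that independence polynomials have real coefficients so purely imaginary roots come in conjugate pairs.

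Finally, to upgrade ``exists a tree'' to ``infinitely many trees'', I would observe that the independence number of any graph equals the degree of its independence polynomial, so
\[
\alpha(T_\alpha) \;=\; \deg I(T_\alpha,x) \;\geq\; \deg I(G_\alpha,x) \;=\; \alpha(G_\alpha) \;=\; \alpha.
\]
Hence $\alpha(T_\alpha) \to \infty$ as $\alpha \to \infty$, and so $\{T_\alpha : \alpha \geq 4\}$ contains infinitely many pairwise non-isomorphic trees, each with $\pm i/n$ as independence roots.

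There is no real obstacle here; the argument is a direct concatenation of two already-established results. The only point requiring a moment of care is the distinctness step, which is handled by the degree/independence-number observation above rather than by any delicate structural claim about the trees produced by Bencs' construction.
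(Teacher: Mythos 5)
Your proof is correct and takes essentially the same approach the paper intends: combine Theorem~\ref{thm:infmanyiovernroots} with Bencs' proposition to produce, for each $\alpha\ge 4$, a tree $T_\alpha$ whose independence polynomial has $I(G_\alpha,x)$ as a factor and hence vanishes at $\pm i/n$. Your degree argument showing $\alpha(T_\alpha)\ge\alpha$ correctly fills in the one detail the paper leaves implicit, namely why the resulting collection of trees is infinite.
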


We conclude by asking some open questions.
%
%\begin{itemize}
%\item For every $\alpha \geq 5$, is there a {\em connected} graph with independence number $\alpha$ that has a purely imaginary independence root? From Propositions~\ref{prop:infpureimroots} and \ref{prop:3mod4} there are for $\alpha = 8k-2$ ($k \geq 1$) and $\alpha = 2k$ for all $k \equiv 3 \pmod{4}$. However, we do not know of any odd such $\alpha$ except $\alpha = 5$.
%\item Are there any purely imaginary independence roots $bi$ where $b$ is \textit{irrational}? From Corollary~\ref{cor:rationalpureimroots} that if $b\in \mathbb{Q}$ and $|b|>1$, then $i\sqrt{b}$ is not a purely imaginary independence root of any graph, but this is all that is known about which irrational purely imaginary numbers cannot be independence roots.
%\end{itemize}
%
\begin{figure}[htb]
\centering
\def \n {8}
\def \r {1}
\def \radius {3cm}
\def \margin {8} % margin in angles, depends on the radius
\scalebox{0.5}{
\begin{tikzpicture}
\begin{scope}[every node/.style={circle,thick,draw,fill}]
    \node (61) at (-6*\r,4.3*\r) {};
    \node (62) at (-6.5*\r,0.9*\r) {};
    \node (63) at (-6*\r,-4.3*\r) {};
    \node (64) at (6*\r,4.3*\r) {};
    \node (65) at (6.5*\r,0.9*\r) {};
    \node (66) at (6*\r,-4.3*\r) {};
 
\end{scope}

\foreach \s in {1,...,\n}
{
  \node[circle,thick,draw,fill] (\s) at ({360/\n * (\s - 1)}:\radius) {};
}

\begin{scope}
    \path [-] (1) edge node {} (2);
    \path [-] (1) edge node {} (3);        
    \path [-] (1) edge node {} (4);
    \path [-] (1) edge node {} (5);
    \path [-] (1) edge node {} (6);    
    \path [-] (1) edge node {} (7);
    \path [-] (1) edge node {} (8);
    
    \path [-] (2) edge node {} (3);
    \path [-] (2) edge node {} (4);
    \path [-] (2) edge node {} (5);
    \path [-] (2) edge node {} (6);
    \path [-] (2) edge node {} (7);
    \path [-] (2) edge node {} (8);
    
    \path [-] (3) edge node {} (4);
    \path [-] (3) edge node {} (5);
    \path [-] (3) edge node {} (6);
    \path [-] (3) edge node {} (7);
    \path [-] (3) edge node {} (8);
    
    \path [-] (4) edge node {} (5);
    \path [-] (4) edge node {} (6);
    \path [-] (4) edge node {} (7);
    \path [-] (4) edge node {} (8);
    
    \path [-] (5) edge node {} (6);
    \path [-] (5) edge node {} (7);
    \path [-] (5) edge node {} (8);
   
    \path [-] (6) edge node {} (7);
    \path [-] (6) edge node {} (8);
    
    \path [-] (7) edge node {} (8);
    
    \path [-] (1) edge node {} (61);
    \path [-] (1) edge node {} (62);
    \path [-] (1) edge node {} (63);
    \path [-] (1) edge node {} (64);
    \path [-] (1) edge node {} (65);
    \path [-] (1) edge node {} (66);
    
    \path [-] (2) edge node {} (61);
    \path [-] (2) edge node {} (62);
    \path [-] (2) edge node {} (63);
    \path [-] (2) edge node {} (64);
    \path [-] (2) edge node {} (65);
    \path [-] (2) edge node {} (66);
    
    \path [-] (3) edge node {} (61);
    \path [-] (3) edge node {} (62);
    \path [-] (3) edge node {} (63);
    \path [-] (3) edge node {} (64);
    \path [-] (3) edge node {} (65);
    \path [-] (3) edge node {} (66);
    
    \path [-] (4) edge node {} (61);
    \path [-] (4) edge node {} (62);
    \path [-] (4) edge node {} (63);
    \path [-] (4) edge node {} (64);
    \path [-] (4) edge node {} (65);
    \path [-] (4) edge node {} (66);
    
    \path [-] (5) edge node {} (61);
    \path [-] (5) edge node {} (62);
    \path [-] (5) edge node {} (63);
    \path [-] (5) edge node {} (64);
    \path [-] (5) edge node {} (65);
    \path [-] (5) edge node {} (66);
    
    \path [-] (6) edge node {} (61);
    \path [-] (6) edge node {} (62);
    \path [-] (6) edge node {} (63);
    \path [-] (6) edge node {} (64);
    \path [-] (6) edge node {} (65);
    \path [-] (6) edge node {} (66);
    
    \path [-] (7) edge node {} (61);
    \path [-] (7) edge node {} (62);
    \path [-] (7) edge node {} (63);
    \path [-] (7) edge node {} (64);
    \path [-] (7) edge node {} (65);
    \path [-] (7) edge node {} (66);
    
    \path [-] (8) edge node {} (61);
    \path [-] (8) edge node {} (62);
    \path [-] (8) edge node {} (63);
    \path [-] (8) edge node {} (64);
    \path [-] (8) edge node {} (65);
    \path [-] (8) edge node {} (66);
  
\end{scope}

\end{tikzpicture}}
\caption{The graph of smallest known order with purely imaginary independence roots, $\overline{K_{6}}+K_{8}$.}\label{fig:smallest}
\end{figure}
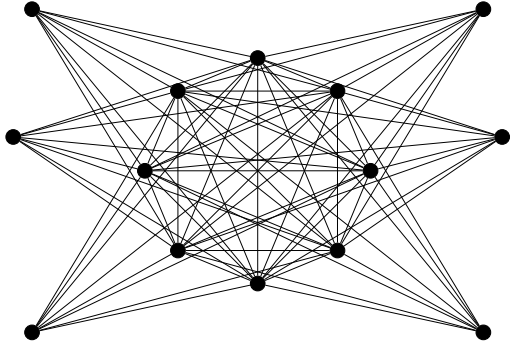

\begin{question}\label{ques:smallest}
What is the graph of smallest order with purely imaginary independence roots?
\end{question}

We have computationally verified that all graphs $G$ with $|V(G)|\le 9$ and all trees $T$ with $|V(T)|\le 20$ have no purely imaginary roots (to avoid rounding errors, our algorithm found the $\gcd$ of $I_{even}(G,x)$ and $I_{odd}(G,x)$ and then solved for any negative real roots). On the other hand, the graph of order $14$ in Figure~\ref{fig:smallest} has purely imaginary independence roots, so the answer to Question~\ref{ques:smallest} lies in the set $\{10,11,12,13,14\}$.

\begin{question}
Are there graphs with purely imaginary independence roots at $bi$ where $b$ is \textit{irrational}?
\end{question}

From Proposition~\ref{prop:rationalpureimroots} we know that there are no irrational purely imaginary independence roots of the form $i\sqrt{b}$ with $b\in\mathbb{Q}$ and $|b|>1$. The problem is open though for all other irrational purely imaginary numbers.

%\subsection*{Declaration of competing interest}
%The authors declare that they have no known competing financial interests or personal relationships that could have
%appeared to influence the work reported in this paper.

\subsection*{Acknowledgements}
The authors acknowledge and thank Jun Ge for catching a mistake in an earlier preprint of this note. Research of J.I.\ Brown is partially supported by grant RGPIN-2018-05227 from Natural Sciences and Engineering Research Council of Canada (NSERC).
%\section{Open Problems}
%While we have shown that there are . 
%
%
%\begin{problem}
%Determine for which positive integers $\alpha$, are there connected graphs with purely imaginary independence roots and independence number $\alpha$.
%\end{problem}
%
%
%The next open problem we will discuss has to do with the nature of purely imaginary independence roots as there are still no known irrational numbers $r$ such that $ri$ is an independence root of any graph.
%\begin{problem}
%Determine which irrational numbers can be independence roots.
%\end{problem}
%
% We speculate that for nonzero integers $n$, the numbers $\frac{i}{\sqrt{n}}$ are good candidates. However, we have used Maple to verify that $G+K_{m}$ (for any $m\ge 0$) does not have irrational independence roots of the form $\frac{i}{\sqrt{n}}$, $n\le 10$, for all graphs $G$ with $|V(G)|\le 9$ and all trees $G$ with $|V(G)|\le 18$. Our computations used Proposition~\ref{prop:purelyimroots} to check if $I_{even}(G,-\tfrac{1}{n})=0$ and if it did then print the value of $I_{odd}(G,-\tfrac{1}{n})$. If it was $0$, then $G$ has an independence root at $\frac{i}{\sqrt{n}}$. If it was a negative integer, $-m$, then $G+K_{m}$ has an independence root at $\frac{i}{\sqrt{n}}$. Therefore, our calculations can be trusted without having to worry about the approximation needed to solve polynomials of high degree in Maple.

\bibliographystyle{plain}
\bibliography{PureImRts}

\begin{thebibliography}{10}

\bibitem{Alavi}
Y.~Alavi, P.~J. Malde, A.~J. Schwenk, and P.~Erd{\H{o}}s.
\newblock {The vertex sequence of a graph is not constrained}.
\newblock {\em Congr. Numer.}, (58):15--23, 1987.

\bibitem{Bencs2018}
F.~Bencs.
\newblock {On trees with real-rooted independence polynomial}.
\newblock {\em Discrete Math.}, 341(12):3321--3330, 2018.

\bibitem{Bohn2014}
A.~Bohn.
\newblock {A dense set of chromatic roots which is closed under multiplication
  by positive integers}.
\newblock {\em Discrete Math.}, 321(1):45--52, 2014.

\bibitem{BrownCameron2018stability}
J.~I. Brown and B.~Cameron.
\newblock {On the stability of independence polynomials}.
\newblock {\em Electron. J. Combin.}, 25(1), 2018.

\bibitem{BrownCameron2018maxmod}
J.~I. Brown and B.~Cameron.
\newblock {Maximum Modulus of Independence Roots of Graphs and Trees}.
\newblock {\em Graphs Combin.}, 2020.

\bibitem{BDN2000}
J.~I. Brown, K.~Dilcher, and R.~J. Nowakowski.
\newblock {Roots of independence polynomials of well-covered graphs}.
\newblock {\em J. Algebraic Combin.}, 11:197--210, 2000.

\bibitem{INDROOTS}
J.~I. Brown, C.~A. Hickman, and R.~J. Nowakowski.
\newblock {On the location of the roots of independence polynomials}.
\newblock {\em J. Algebraic Combin.}, 19:273--282, 2004.

\bibitem{BrownNowakowski2001}
J.~I. Brown and R.~J. Nowakowski.
\newblock {Bounding the roots of independence polynomials}.
\newblock {\em Ars Combin.}, 58:113--120, 2001.

\bibitem{Chudnovsky2007}
M.~Chudnovsky and P.~Seymour.
\newblock {The roots of the independence polynomial of a clawfree graph}.
\newblock {\em J. Combin. Theory Ser. B}, 97(3):350--357, 2007.

\bibitem{Csikvari2013}
P.~Csikv{\'{a}}ri.
\newblock {Note on the smallest root of the independence polynomial}.
\newblock {\em Combin. Probab. Comput.}, 22(1):1--8, 2013.

\bibitem{Gutman1992}
I.~Gutman.
\newblock {Independent vertex sets in some compound graphs}.
\newblock {\em Publications de l'Institut Math{\'{e}}matique}, 52(66):5--9,
  1992.

\end{thebibliography}

\end{document}